\newtheorem{thm}{Theorem}[section]
\newtheorem*{thm*}{Theorem}
\newtheorem{cor}[thm]{Corollary}
\newtheorem*{cor*}{Corollary}
\newtheorem{lem}[thm]{Lemma}
\newtheorem{con}{Conjecture}
\newtheorem*{con*}{Conjecture}
\newtheorem{que}{Question}
\newtheorem*{que*}{Question}
\newtheorem*{prob*}{Problem}
\theoremstyle{definition}
\theoremstyle{remark}
\newcommand{\hol}{{\text{Hol}}}
\newcommand{\aut}{{\text{Aut}}}
\newcommand{\bbZ}{{\mathbb{Z}}}
\newcommand{\cF}{{\mathcal{F}}}
\begin{document}

\title[On the linearity of the holomorph group of a free group]
{On the linearity of the holomorph group of a free group on two
generators}

\author[F. R. Cohen]{F. R. Cohen$^{*}$}
\address{Department of Mathematics,
University of Rochester, Rochester, NY 14225, U.S.A.}
\email{cohf@math.rochester.edu}
\thanks{$^{*}$Partially supported by DARPA grant number
2006-06918-01. }

\author{Vassilis Metaftsis}
\address{Department of Mathematics,
University of the Aegean, Gr-83200 Karlovassi, Samos, Greece}
\email{vmet@aegean.gr}

\author{Stratos Prassidis}
\address{Department of Mathematics and Statistics, Canisius College,
Buffalo, New York 14208, U.S.A} \email{prasside@canisius.edu}

\begin{abstract}
Let $F_n$ denote the free group generated by $n$ letters. The
purpose of this article is to show that $\hol(F_2)$, the holomorph
of the free group on two generators, is linear. Consequently, any
split group extension $G=F_2\rtimes H$ for which $H$
is linear has the property that $G$ is linear. This result gives a
large linear subgroup of $\text{Aut}(F_3)$. A second application is
that the mapping class group for genus one surfaces with two
punctures is linear.
\end{abstract}
\maketitle

\section{Introduction and Preliminaries}

The purpose of this paper is to consider whether certain families of
discrete groups given by natural semi-direct products are linear.
The {\it holomorph} of a group $G$, $\hol(G)$, is the universal
split extension of $G$:
$$1 \to G \to \hol(G) \xrightarrow{p} \aut(G) \to 1,$$
where $\aut(G)$ acts on $G$ in the obvious way. Furthermore, the
symbol $H{\rtimes}G$ denotes the semi-direct product given by the
split extension
$$1 \to G \to G{\rtimes}H \xrightarrow{p} H \to 1$$ with the precise
action of $H$ on $G$ suppressed. The group $\hol(G)$ is universal in
the sense that any semi-direct product $G{\rtimes}H$ is given by the
pullback obtained from a homomorphism $H \to {\aut}(G)$.

Recall that a group $G$ is called {\it linear} if it admits a faithful, finite
dimensional representation in $Gl(m,k)$ for a field $k$ of
characteristic zero.

The main results here addresses a special case of the following
general question stated in \cite{cclp}:
\begin{que}
Let $\Gamma$ and $\pi$ be linear groups. Let
$$1 \to \Gamma \to G \to \pi \to 1$$
be a split extension. Give conditions which imply that $G$ is
linear.
\end{que}

It should be noted that the answer is not always positive with a
basic example given by Formanek and Procesi's  ``poison
group''(\cite{fp}). Many geometrically interesting groups fit into
the scheme given above. More specifically,  examples are given by
pure braid groups (and thus braid groups), McCool subgroups of
$\aut(F_n)$, certain fundamental groups of complements of hyperplane
arrangements, certain mapping class groups, just to mention a few.

Our main interest in the above problem is when the normal subgroup
$\Gamma$ is a linear group and $\pi$ is $\aut(\Gamma)$. When $G = F_n$, $n>2$, the free group on $n$
generators, then $\hol(F_n)$ is not linear because it contains
$\aut(F_n)$ which is not linear (\cite{fp}). The main result in this
paper is that $\hol(F_2)$ is linear.

\begin{thm*}[Main Theorem]\label{thm:main}
The group $\hol(F_2)$ is linear.
\end{thm*}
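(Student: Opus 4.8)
The plan is to exhibit a single faithful finite-dimensional representation of $\hol(F_2)$ over a field $k$ of characteristic zero. Write the defining split extension as $1 \to F_2 \to \hol(F_2) \xrightarrow{p} \aut(F_2) \to 1$, so that the normal copy of $F_2$ is precisely $\ker p$. My first step is a reduction: faithfulness can be assembled from two weaker pieces. Suppose $\rho\colon \aut(F_2)\to \mathrm{GL}(m,k)$ is faithful and $\mu\colon \hol(F_2)\to \mathrm{GL}(N,k)$ is any finite-dimensional representation whose restriction to the normal subgroup $\ker p\cong F_2$ is faithful. Then $\mu\oplus(\rho\circ p)$ is faithful on $\hol(F_2)$, since its kernel is $\ker\mu\cap\ker(\rho\circ p)=\ker\mu\cap\ker p=\ker(\mu|_{F_2})=1$. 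Thus the problem separates into (i) the linearity of the quotient $\aut(F_2)$ and (ii) the construction of a representation $\mu$ that is faithful on the normal free subgroup.

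For (i) I would invoke the known linearity of $\aut(F_2)$ and pull a faithful representation back along $p$. This is exactly the point at which the rank-two hypothesis is essential: for $n\ge 3$ the group $\aut(F_n)$ is already non-linear by Formanek--Procesi \cite{fp}, so no analogue of (i) is available and indeed $\hol(F_n)$ fails to be linear. Hence all of the genuinely new work is concentrated in (ii), the normal subgroup $\ker p$.

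To build $\mu$ I would use a representation of Magnus/Fox-derivative type. Identify $\hol(F_2)$ with its image in $\aut(F_3)$, where $F_3=\langle x,y,t\rangle$, an automorphism $\phi$ of $F_2$ acting by fixing $t$ and a word $w$ in the normal factor acting by $x\mapsto x$, $y\mapsto y$, $t\mapsto wt$. Fix a faithful representation $\theta\colon F_2\to \mathrm{GL}(2,k)$ with discrete free image (for instance the Sanov representation), extend it to the group algebra $k[F_2]$, and assign to each element of $\hol(F_2)$ the matrix of Fox derivatives of the corresponding automorphism, with entries pushed through $\theta$. The chain rule for Fox derivatives makes this assignment multiplicative once one keeps track of the accompanying action on the coefficients, so that it defines a genuine homomorphism into a matrix group over $k$. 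Crucially, the automorphism $t\mapsto wt$ has Fox Jacobian recording $\theta(w)$ directly; since $\theta$ is faithful, distinct $w$ give distinct matrices, so that $\mu$ is faithful on $\ker p$.

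The main obstacle lies entirely in (ii) and is twofold. First, one must convert the Fox-derivative cocycle, which is only multiplicative up to the twisting action on coefficients, into an honest finite-dimensional linear representation over a characteristic-zero field; this requires absorbing the coefficient action so that the target is a fixed $\mathrm{GL}(N,k)$. Second, and more seriously, one must prove that the resulting $\mu$ really is faithful on the normal $F_2$. This is a faithfulness statement of precisely the kind that is delicate for Burau- and Gassner-type representations, where faithfulness can and does fail; here it must be forced by choosing $\theta$ with sufficiently generic (Zariski-dense, discrete) image so that no nontrivial word of the normal factor is annihilated. Once (i) and (ii) are secured, $\mu\oplus(\rho\circ p)$ is the desired faithful representation, and $\hol(F_2)$ is linear.
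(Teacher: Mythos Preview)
Your reduction is correct and is exactly the shape of the paper's argument: find two homomorphisms from (a finite-index subgroup of) $\hol(F_2)$ to known linear groups whose kernels meet trivially, one of them being the structure map $p$. So all of the content, for you as for the paper, lies in producing the \emph{second} map---your $\mu$, required to be faithful on the normal $F_2=\ker p$.

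This is where your proposal has a genuine gap. The Fox Jacobian satisfies the twisted chain rule $J(\alpha\beta)=J(\alpha)\cdot\alpha_*\!\bigl(J(\beta)\bigr)$, so to obtain an honest homomorphism into a fixed $GL_N(k)$ you must kill the $\alpha_*$-action on the coefficient ring. But the copy of $\aut(F_2)$ inside $\hol(F_2)\subset\aut(F_3)$ acts on the coefficients $\mathbb Z[F_3]$ through the full, non-inner $\aut(F_2)$-action on $\langle x,y\rangle$. Any specialization that trivializes this action (for instance $x,y\mapsto 1$) also sends the crucial entry $w$ of the Jacobian of $t\mapsto wt$ to $1$, so the resulting $\mu$ sees only the exponent sums of $w$ and is \emph{not} faithful on the normal $F_2$. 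Conversely, keeping your $\theta$ faithful on $\langle x,y\rangle$ leaves the twist alive, and there is no finite-dimensional $GL_N(k)$ that absorbs it. You flag this obstacle yourself, but ``absorbing the coefficient action'' is precisely the missing step, not a detail.

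The paper avoids Fox calculus entirely; its second map is purely group-theoretic. Writing $\tau_a,\tau_b\in\aut(F_2)$ for the inner automorphisms and setting $t_a=a^{-1}\tau_a$, $t_b=b^{-1}\tau_b$ inside $\hol(F_2)$, one checks directly that $t_a,t_b$ commute with $a,b$, so $\langle a,b,t_a,t_b\rangle\cong F_2\times F_2$, and that $\langle t_a,t_b\rangle$ is normal. (The paper carries this out on an explicit finite-index subgroup $\pi\le\hol(F_2)$, but the same identities hold globally.) Quotienting by $\langle t_a,t_b\rangle$ yields a second surjection to $\aut(F_2)$---concretely $(w,\phi)\mapsto \tau_w\phi$---whose kernel $\langle t_a,t_b\rangle$ meets $\ker p=\langle a,b\rangle$ trivially. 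Composed with a faithful representation of $\aut(F_2)$, this \emph{is} the $\mu$ you were seeking; the point is that it falls out of a change of generators rather than from a Magnus-type construction.
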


The method of proof is to show that $\hol(F_2)$ contains a finite
index subgroup $\pi$  which is linear. That is done by exhibiting
explicit maps from $\pi$  to a product of two groups such that (i)
each of the two groups is linear as subgroups of ${\aut}(F_2)$ and
(ii) the product map is an embedding, thus showing that $\pi$ is
linear. Since linearity is preserved under finite extensions,
$\hol(F_2)$ is linear. In addition, the main Theorem has the
following consequence.

\begin{cor}\label{cor-extra}
Let $\pi$ be a linear group. Then the semidirect product
$F_2{\rtimes}\pi$ is linear. In particular, any group extension $G$
given by $$1 \to F_2 \to G \to \mathbb Z\to 1$$ is linear.
\end{cor}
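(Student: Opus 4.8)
The plan is to exploit the universal property of the holomorph recorded in the introduction: every semidirect product $F_2\rtimes\pi$ is the pullback of $p\colon\hol(F_2)\to\aut(F_2)$ along the homomorphism $\phi\colon\pi\to\aut(F_2)$ that defines the action of $\pi$ on $F_2$. Fixing such a $\phi$, I would write $F_2\rtimes_\phi\pi$ for the resulting semidirect product and consider the map
\[
\Psi\colon F_2\rtimes_\phi\pi\longrightarrow\hol(F_2)\times\pi,\qquad (g,x)\longmapsto\bigl((g,\phi(x)),\,x\bigr).
\]
Here the first coordinate $(g,x)\mapsto(g,\phi(x))$ is a homomorphism into $\hol(F_2)=F_2\rtimes\aut(F_2)$ because the twisting by $\phi(x)$ built into multiplication in the holomorph matches exactly the twisting by the action of $x$ in $F_2\rtimes_\phi\pi$; the second coordinate is simply the projection onto $\pi$.

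The crucial point is faithfulness. The homomorphism into $\hol(F_2)$ by itself is not injective---its kernel is $\ker\phi$---and this is precisely why the projection onto $\pi$ is appended. If $(g,x)\in\ker\Psi$, then the $\pi$-coordinate gives $x=1$, and the holomorph coordinate then gives $g=1$; hence $\Psi$ is injective and realizes $F_2\rtimes_\phi\pi$ as a subgroup of $\hol(F_2)\times\pi$.

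I would then invoke the Main Theorem together with two standard permanence properties of the class of linear groups over characteristic-zero fields: closure under finite direct products (take the direct sum of faithful representations after extending scalars to a common field) and closure under passage to subgroups. Since $\hol(F_2)$ is linear by the Main Theorem and $\pi$ is linear by hypothesis, the product $\hol(F_2)\times\pi$ is linear, and therefore so is its subgroup $F_2\rtimes_\phi\pi$. As $\phi$ was arbitrary, every semidirect product $F_2\rtimes\pi$ is linear.

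For the final assertion, any extension $1\to F_2\to G\to\bbZ\to1$ splits, since $\bbZ$ is free: lifting a generator of $\bbZ$ to $G$ produces a section, so $G$ is isomorphic to a semidirect product $F_2\rtimes\bbZ$. Because $\bbZ$ is linear, via $n\mapsto\left(\begin{smallmatrix}1&n\\0&1\end{smallmatrix}\right)$ in $Gl(2,\bbZ)$, the first part of the corollary yields that $G$ is linear. I do not expect a genuine obstacle in this argument; the only step requiring care is the injectivity of $\Psi$, namely the observation that the holomorph embedding must be augmented by the projection to $\pi$ in order to kill $\ker\phi$.
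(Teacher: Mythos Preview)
Your argument is correct and matches the paper's proof: both embed $F_2\rtimes\pi$ into $\hol(F_2)\times\pi$ via the map to the holomorph (the paper phrases this as $i\times p$ coming from a pullback square) and then invoke the Main Theorem together with linearity of $\pi$. Your treatment is in fact slightly more complete, since you explicitly justify why an arbitrary extension of $\bbZ$ by $F_2$ is split, a point the paper leaves implicit.
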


A second corollary implies that certain mapping class groups are
linear. Let $T$ denote the torus $S^1 \times S^1$. Let $\Gamma_1^k$
denote that mapping class group for genus one surfaces with $k$
punctured points. The linearity of the case of $\Gamma_1^1$ is given by
the fact that this group is $SL(2,\mathbb Z)$.

\begin{cor}\label{cor:gamma.1.2.is.linear}
The group $\Gamma_1^2$ is linear.
\end{cor}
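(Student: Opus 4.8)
The plan is to realize $\Gamma_1^2$ as a finite extension of a group assembled from $F_2$ and $SL(2,\bbZ)$, and then to invoke the Main Theorem through Corollary~\ref{cor-extra}. Let $P\Gamma_1^2$ denote the pure mapping class group, consisting of those classes fixing each of the two punctures. It sits inside $\Gamma_1^2$ as a subgroup of index two, the quotient $\Gamma_1^2/P\Gamma_1^2\cong\bbZ/2$ recording whether the two punctures are interchanged. Since linearity is preserved under finite extensions, it suffices to prove that $P\Gamma_1^2$ is linear.

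First I would set up the Birman exact sequence obtained by forgetting one of the two marked points. Filling in that puncture produces the once-punctured torus $S_{1,1}$, whose fundamental group is free of rank two and whose mapping class group is $\Gamma_1^1=SL(2,\bbZ)$. Because $\pi_1(S_{1,1})\cong F_2$ has trivial center, the point-pushing homomorphism is injective, and the forgetful map yields a short exact sequence
$$1\to F_2\to P\Gamma_1^2\to SL(2,\bbZ)\to 1,$$
in which the normal subgroup $F_2$ is the point-pushing subgroup and the quotient acts on it through the natural inclusion $SL(2,\bbZ)\hookrightarrow GL(2,\bbZ)=\text{Out}(F_2)$.

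It remains to extract linearity of $P\Gamma_1^2$ from this extension. Should the sequence split, then $P\Gamma_1^2\cong F_2\rtimes SL(2,\bbZ)$ and, since $SL(2,\bbZ)$ is linear, Corollary~\ref{cor-extra} applies directly. To sidestep any splitting question I would instead argue directly: since $F_2$ is normal with trivial center, conjugation defines a homomorphism $P\Gamma_1^2\to\aut(F_2)$, and pairing it with the projection to $SL(2,\bbZ)$ gives a map
$$P\Gamma_1^2\longrightarrow \aut(F_2)\times SL(2,\bbZ)$$
whose kernel consists of elements of $F_2$ that centralize $F_2$, hence is trivial. Thus $P\Gamma_1^2$ embeds in a product of two linear groups: $\aut(F_2)$ is linear as a subgroup of $\hol(F_2)=F_2\rtimes\aut(F_2)$ by the Main Theorem, and $SL(2,\bbZ)$ is visibly linear. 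Therefore $P\Gamma_1^2$ is linear, and hence so is $\Gamma_1^2$.

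The step I expect to require the most care is the identification of the terms of the Birman sequence — confirming that forgetting a puncture returns exactly $SL(2,\bbZ)$ rather than a larger group, that the point-pushing kernel is genuinely $F_2$, and that the pure-versus-full index-two bookkeeping is handled correctly. Once these geometric identifications are in place, the linearity conclusion is forced by the Main Theorem, so the essential mathematical content has already been established.
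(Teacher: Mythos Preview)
Your argument is correct and rests on the same Birman extension $1 \to F_2 \to P\Gamma_1^2 \to SL(2,\bbZ) \to 1$ that the paper uses, but the endgame differs. The paper pulls this extension back over the level-two congruence subgroup $\Gamma(2,2) \subset SL(2,\bbZ)$ to obtain an index-six subgroup $K \le P\Gamma_1^2$; since $\Gamma(2,2)$ is free, the restricted extension splits, and the Main Theorem (via Corollary~\ref{cor-extra}) applies to $K$. You instead sidestep the splitting question entirely by embedding $P\Gamma_1^2$ in $\aut(F_2)\times SL(2,\bbZ)$ via conjugation crossed with the projection, using only that $Z(F_2)=1$. Your route is shorter and in fact works for \emph{any} (not necessarily split) extension of a linear group by $F_2$; it also needs only the linearity of $\aut(F_2)$, which was known prior to this paper, so strictly speaking it does not consume the new content of the Main Theorem. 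The paper's route, by contrast, genuinely exercises Corollary~\ref{cor-extra}, which is presumably why $\Gamma_1^2$ is recorded here as a corollary.
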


Moreover, in \cite{cw}, it was shown that $\hol(F_2)$ is a subgroup
of $\aut(F_3)$. Thus the Main Theorem implies that $\hol(F_2)$ is a
large, natural, linear subgroup of $\aut(F_3)$.  That is a partial answer to
a more general question.

\begin{que}\label{cs}
Find large linear subgroups of $\aut(F_n)$, $n \ge 3$.
\end{que}

The methods here do not generalize for $n > 3$ as $Aut(F_3)$, and
thus $Aut(F_n)$, $n > 3$, are not linear by Formanek and Procesi
\cite{fp}. Nonetheless, they suggest a possible extension for $n = 4$.

\begin{con}\label{con-hol}
The group $G$ defined by the natural extension
$$1 \to F_3 \to G \to \hol(F_2) \to 1.$$
with the natural action of  $\hol(F_2)$ on $F_3$, is linear.
\end{con}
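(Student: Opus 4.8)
The plan is to imitate the strategy of the Main Theorem one level higher, while confronting the fact that the cheap pullback argument behind Corollary~\ref{cor-extra} is unavailable. First I would make the action explicit. By \cite{cw} the holomorph $\hol(F_2)=F_2\rtimes\aut(F_2)$ embeds in $\aut(F_3)$ as follows: writing $F_3=\langle a,b,c\rangle$ with $N=\langle a,b\rangle$ the distinguished rank-two free factor, the normal $F_2=\langle u,v\rangle\le\hol(F_2)$ acts by translating the last generator, $u\colon c\mapsto ac$ and $v\colon c\mapsto bc$ (fixing $N$), while the $\aut(F_2)$-factor acts on $N$ tautologically and fixes $c$. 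Thus the group of the Conjecture is
$$G=F_3\rtimes\hol(F_2)=(F_3\rtimes F_2)\rtimes\aut(F_2)\ \le\ F_3\rtimes\aut(F_3)=\hol(F_3).$$

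The first thing to record is the obstruction. The proof of Corollary~\ref{cor-extra} realizes $F_2\rtimes\pi$ as a subgroup of the linear group $\hol(F_2)\times\pi$, but the naive analogue here embeds $G$ only into $\hol(F_3)\times\hol(F_2)$, and $\hol(F_3)\supseteq\aut(F_3)$ is \emph{not} linear by \cite{fp}. So no generic ``free-by-linear'' principle can apply (the poison group forbids it); the special triangular nature of the action must be used. I would then mirror the Main Theorem's mechanism: pass to a finite-index subgroup $\pi\le G$ and produce explicit homomorphisms from $\pi$ into a product of linear groups whose product map is injective. One factor is forced, namely the composite $G\to\hol(F_2)\xrightarrow{\sigma}Gl(d,k)$, where $\sigma$ is the faithful representation furnished by the Main Theorem; this is faithful on the quotient $G/F_3=\hol(F_2)$ but annihilates $F_3$.

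The real work is the complementary factor, a representation faithful on the normal $F_3$ and compatible with the $\hol(F_2)$-action. Here triangularity pays off. On the associated graded Lie algebra of $F_3$ the translating subgroup $\langle u,v\rangle$ acts unipotently, being the exponential of the degree-preserving nilpotent derivation $c\mapsto a$ (resp.\ $c\mapsto b$), while $\aut(F_2)$ acts through $Gl(2,\mathbb{Z})$ on $N$ and fixes $c$; on $H_1(F_3)=\mathbb{Z}^3$ this realizes $\hol(F_2)$ inside the affine group $\mathbb{Z}^2\rtimes Gl(2,\mathbb{Z})$. I would try to promote this to a genuinely faithful representation of $F_3$ via a Magnus/Fox free-differential representation over the group ring of a nilpotent (or abelianized) quotient on which $\hol(F_2)$ acts linearly and equivariantly, and then specialize the variables so as to land in a finite-dimensional $Gl$ while retaining faithfulness on $F_3$. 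Injectivity of the resulting product map is then checked separately on the normal $F_3$ (second factor) and on $\hol(F_2)$ (first factor), and linearity of $G$ follows from that of $\pi$.

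The main obstacle is precisely this last construction: producing a \emph{single finite-dimensional} representation that is simultaneously faithful on the free group $F_3$ and equivariant for the $\hol(F_2)$-action. The familiar faithful representations of a free group (into $Gl(2,\mathbb{Z})$, say) carry no visible $\hol(F_2)$-equivariance, whereas the equivariant constructions coming from the lower central series are faithful only on the finite nilpotent quotients $F_3/\gamma_k(F_3)$ and not on $F_3$ itself; and one cannot simply invoke the full infinite-dimensional Magnus representation, since the non-linearity of $\hol(F_3)$ warns that no purely formal argument can succeed. Choosing the nilpotent quotient, the module, and the specialization so that the off-diagonal Magnus cocycle remains injective on $F_3$ after truncation is where the content of the Conjecture lies. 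As an alternative avenue, I note that via the Birman exact sequence $G$ should be commensurable with the mapping class group $\Gamma_1^3$ (since $\pi_1(\Sigma_1^2)\cong F_3$ and $\Gamma_1^2$ is, by Corollary~\ref{cor:gamma.1.2.is.linear}, tied to $\hol(F_2)$), which suggests attacking the same faithfulness problem through a homological, Lawrence--Krammer type representation rather than a Magnus one.
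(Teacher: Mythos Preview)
The statement you are attempting to prove is a \emph{conjecture} in the paper; the authors do not give a proof, they only record that a positive answer would imply linearity of $M_4^+$. So there is no ``paper's own proof'' to compare against.

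As for the proposal itself: it is not a proof but an outline, and you say so yourself. You correctly identify why the argument behind Corollary~\ref{cor-extra} breaks (the pullback lands in $\hol(F_3)\times\hol(F_2)$, which is non-linear), and you correctly isolate what a two-factor embedding $\pi\to A\times B$ in the spirit of Lemma~\ref{lem:embedding} would require: a finite-dimensional representation that is faithful on the normal $F_3$ and equivariant for the $\hol(F_2)$-action. But you then explicitly concede that you do not know how to build this representation, and that the nilpotent/Magnus constructions you gesture at are faithful only on finite nilpotent quotients, not on $F_3$. That is exactly the content of the conjecture, so the proposal reduces to a restatement of the problem together with a list of tools that do not quite reach it. The alternative route via $\Gamma_1^3$ is a reasonable reformulation, but linearity of $\Gamma_1^3$ is likewise open and of comparable difficulty.

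One small correction: in the embedding $E\colon\hol(F_2)\hookrightarrow\aut(F_3)$ as described in the paper (following \cite{cw}), an element $h$ of the normal $F_2$ acts on the extra generator by \emph{conjugation}, $c\mapsto hch^{-1}$, not by the one-sided translation $c\mapsto hc$ you wrote. This does not affect your high-level analysis, but it does change the induced action on $H_1(F_3)$: under conjugation the normal $F_2$ acts trivially on $H_1$, so your claim that $\hol(F_2)$ is realized inside the affine group $\mathbb{Z}^2\rtimes GL(2,\mathbb{Z})$ on $H_1(F_3)$ is not correct for this action.
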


A positive answer to Conjecture \ref{con-hol} has an interesting
consequence. Let $M_n$ be the McCool subgroup of $\aut(F_n)$ i.e.
the subgroup generated by basis-conjugation automorphisms
\cite{mccool}. Let $M_n^+$ be the upper-triangular McCool subgroup
as defined in section \ref{sec:proof.of.main.theorem} below or in
\cite{cpvw}. An easy calculation shows that
$$M_3^+ \cong P_3 \cong F_2{\times}{\mathbb{Z}} <
\hol(F_2).$$
Also, in \cite{cpvw}, it was shown that there is a split exact sequence for all $n$:
$$1 \to F_{n-1} \to M_n^+ \to M_{n-1}^+ \to 1.$$
Combining all the above, we see that a positive answer to Conjecture \ref{con-hol} implies the
linearity of $M_4^+$.

Notice that $\hol(F_2)$ fits into a split exact sequence:
$$1 \to F_2 \to \hol(F_2) \to \aut(F_2) \to 1.$$
The linearity question of $\aut(F_2)$ was reduced to the linearity
of the braid group on four strands. In \cite{dfg}, it was shown that
$\aut(F_2) {\times}{\mathbb{Z}}$ is commensurable with $B_4$. The
linearity of $B_4$ and the other braid groups was settled
(\cite{big}, \cite{kr1}, \cite{kr2}), proving the linearity of
$\aut(F_2)$.

The following conjecture was formulated in \cite{cclp} which
addresses the linearity question of split extensions with kernel a
free group.

\begin{con}
Let $G$ be a linear group and
$$1 \to F_n \to \Gamma \to G \to 1$$
a split exact sequence with $F_n$ a finitely generated free group.
If $G$ acts trivially on the homology of $F$, then $\Gamma$ is
linear.
\end{con}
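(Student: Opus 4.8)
The plan is to reduce the entire family of assertions---one for each linear $G$---to a single linearity statement, and then to attack that statement through the nilpotent quotients of $F_n$. Because $G$ acts trivially on $H_1(F_n;\mathbb{Z})\cong\mathbb{Z}^n$, the defining homomorphism $\rho\colon G\to\aut(F_n)$ factors through $\mathrm{IA}_n:=\ker\bigl(\aut(F_n)\to GL(n,\mathbb{Z})\bigr)$. Setting $\Gamma=F_n\rtimes_\rho G$, the assignment $(f,g)\mapsto(f,\rho(g))$ is a homomorphism $\Gamma\to F_n\rtimes\mathrm{IA}_n$, and combined with the quotient $\Gamma\to G$ it yields an embedding $\Gamma\hookrightarrow (F_n\rtimes\mathrm{IA}_n)\times G$ (the kernel is trivial regardless of whether $\rho$ is injective). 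Since subgroups and finite products of linear groups are linear and $G$ is linear by hypothesis, it suffices to prove that the \emph{$\mathrm{IA}$-holomorph} $F_n\rtimes\mathrm{IA}_n$ is linear. This group is a subgroup of $\hol(F_n)$; although $\hol(F_n)$ is non-linear for $n>2$, the homology hypothesis is exactly what removes the Formanek--Procesi automorphisms \cite{fp} responsible for the non-linearity, so there is no contradiction in expecting this particular subgroup to be linear.

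For $n=2$ there is nothing further to do: by Nielsen's theorem $\mathrm{IA}_2=\mathrm{Inn}(F_2)\cong F_2$, so $F_2\rtimes\mathrm{IA}_2\le\hol(F_2)$ is linear by the Main Theorem, and indeed Corollary \ref{cor-extra} already gives the conclusion for $n=2$ with no homology hypothesis at all. For $n\ge 3$ I would exploit the one structural advantage of $\mathrm{IA}_n$ over the full automorphism group: its elements act \emph{unipotently} on each nilpotent quotient $N_k=F_n/\gamma_{k+1}(F_n)$, inducing the identity on every graded piece $\gamma_k/\gamma_{k+1}$ (the Andreadakis--Johnson description of the lower central filtration). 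Each truncation $N_k\rtimes G$ is then a nilpotent-by-linear group on which $G$ acts unipotently, and admits an explicit finite-dimensional representation assembled from the Magnus representation, obtained via Fox free differential calculus over $\mathbb{Z}[\mathbb{Z}^n]$ and tensored with a faithful representation of $G$. Because $F_n$ is residually nilpotent, $\bigcap_k\gamma_{k}(F_n)=\{1\}$, so this family of representations separates the points of $\Gamma$.

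The hard part---and the reason the statement remains a conjecture---is to descend from this compatible family to a \emph{single} faithful representation of finite dimension. Residual nilpotence only embeds $\Gamma$ into the inverse limit $\varprojlim_k (N_k\rtimes G)$, and the Malcev (unipotent) completion of a non-abelian free group is genuinely pro-unipotent and infinite-dimensional; since $F_n$ is not nilpotent there is no bound on the class $k$ one may discard, so the naive route of realizing the $\mathrm{IA}_n$-action by conjugation inside a unipotent matrix group cannot terminate. A successful argument must instead produce a bounded-dimensional construction by other means. The two approaches I would pursue are: (i) an induction that builds split sequences with free kernel $F_{n-1}$ analogous to the McCool filtration $1\to F_{n-1}\to M_n^+\to M_{n-1}^+\to 1$ recalled above, anchored at the $n=2$ case; and (ii) a refinement of the two-variable Magnus/Gassner representation used to linearize $\hol(F_2)$, enlarged so that the non-inner generators of $\mathrm{IA}_n$---which first appear at $n=3$---are carried by a single linear group over a Laurent ring. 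Either way, the decisive difficulty is retaining faithfulness on the free kernel while holding the dimension finite; in particular a solution would settle the still-open question of whether $\mathrm{IA}_n$ itself is linear, since $\mathrm{IA}_n$ embeds in $F_n\rtimes\mathrm{IA}_n$ as a complement to $F_n$.
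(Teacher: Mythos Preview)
There is nothing to compare against: the statement is presented in the paper as an open \emph{conjecture} (formulated in \cite{cclp}), and the paper offers no proof or proof sketch for it. What the paper does contribute is the case $n=2$ via the Main Theorem and Corollary~\ref{cor-extra}, where, as you correctly note, the homology hypothesis is unnecessary.

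Your proposal is not a proof either, and to your credit you say so explicitly. The reduction you give is sound: the map $(f,g)\mapsto\bigl((f,\rho(g)),g\bigr)$ does embed $\Gamma$ into $(F_n\rtimes\mathrm{IA}_n)\times G$, so the conjecture is indeed equivalent to the linearity of the single group $F_n\rtimes\mathrm{IA}_n$ for each $n$. You also correctly locate the obstruction: residual nilpotence of $F_n$ yields only a separating \emph{family} of finite-dimensional representations, and passing to a single one is exactly the missing step. Your closing observation that a solution would settle the (still open) linearity of $\mathrm{IA}_n$ for $n\ge 3$ is a fair indication of the depth of the problem. In short, the write-up is an honest and well-organized research outline, but it should be labeled as such rather than as a proof; the conjecture remains open.
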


The homological condition is needed in that generality because of
the counterexample in \cite{fp}. It should be noted that the
situation in the Main Theorem is different because the action of
$\aut(F_2)$ on $F_2$ is not trivial on the homology. One consequence
of this conjecture is that $M_n$ is linear.

Corollary \ref{cor-extra} implies that $F_2{\rtimes}{\bbZ}$ is
linear. The above remarks suggest the following conjecture.

\begin{con}
Let $F$ be a free group and
 $\Gamma = F{\rtimes}{\mathbb{Z}}$. Then $\Gamma$ is linear. That
is if
$$1 \to F \to \Gamma \to \mathbb{Z} \to 1$$
is exact, then $\Gamma$
is linear.

\end{con}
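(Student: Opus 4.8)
The plan is to recognize $\Gamma$ as a \emph{free-by-cyclic} group and attack it with geometric methods. Since $\mathbb{Z}$ is free, the action $\mathbb{Z}\to\aut(F)$ is determined by the image $\phi$ of a generator, so $\Gamma=F\rtimes_\phi\mathbb{Z}$ is the mapping torus of the single automorphism $\phi$, and we may assume $F=F_n$ is finitely generated of rank $n$. The goal is then to produce a faithful finite-dimensional complex representation of this mapping torus, and I would organize the argument according to the dynamics of $\phi$.

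The key dichotomy is whether $\phi$ is \emph{atoroidal}, i.e. has no nontrivial periodic conjugacy classes; by a theorem of Brinkmann this is equivalent to $\Gamma$ being word-hyperbolic. In the atoroidal case the plan is to cubulate $\Gamma$: using the codimension-one structure coming from a relative train-track representative of $\phi$ in the sense of Bestvina--Handel, one shows that $\Gamma$ acts properly and cocompactly on a CAT(0) cube complex. Granting such an action, Agol's theorem yields a finite-index subgroup that is special in the sense of Haglund--Wise, hence embeds in a right-angled Artin group. Since right-angled Artin groups inject into $SL_N(\mathbb{Z})$, and linearity is inherited by subgroups and by finite extensions, $\Gamma$ is linear in this case.

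The remaining case, when $\phi$ has periodic conjugacy classes, is where I expect the real difficulty. Here $\Gamma$ is not hyperbolic: it contains distorted $\mathbb{Z}^2$ subgroups coming from the polynomially growing strata of a relative train-track representative. The plan is to induct on the complexity of $\phi$. The polynomially growing part should determine a malnormal family of peripheral subgroups making $\Gamma$ hyperbolic relative to free-by-cyclic groups of strictly smaller rank together with poly-growth pieces; the poly-growth pieces, whose monodromy acts unipotently, should be linear directly via upper-triangular, Magnus-type representations, and these serve as the base cases of the induction. A relative cubulation, together with the inductively supplied linear representations of the peripheral pieces, should then assemble into a faithful representation of $\Gamma$.

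The main obstacle is precisely this assembly step in the non-hyperbolic case. The peripheral $\mathbb{Z}^2$'s are exponentially distorted in $\Gamma$, so there is no a priori reason that locally faithful representations on the pieces glue to a globally faithful one, and the relative cubulation must be arranged to separate the peripheral cosets. Controlling this distortion---equivalently, ensuring that the relatively hyperbolic Dehn-filling and cubulation machinery applies with the specific abelian peripheral structure occurring here---is the crux on which the whole argument turns.
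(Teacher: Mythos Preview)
The statement you are attempting is labeled \emph{Conjecture} in the paper, not a theorem; the paper offers no proof of it. The only instance the paper actually establishes is the rank-two case $F=F_2$, which follows from Corollary~\ref{cor-extra} (any split extension of $F_2$ by a linear group is linear) once the Main Theorem on $\hol(F_2)$ is in hand. So there is nothing in the paper to compare your argument against.

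Regarding the proposal on its own merits: the atoroidal branch is, with hindsight, a correct outline---Brinkmann plus Hagen--Wise cubulation plus Agol does give linearity of hyperbolic free-by-cyclic groups---but you have not given a proof of the conjecture, and you say so yourself. The non-atoroidal case is left as a plan whose ``crux'' (the assembly/gluing of representations over a relatively hyperbolic decomposition with distorted $\mathbb{Z}^2$ peripherals) you explicitly do not carry out; that is exactly the hard part, and nothing in your sketch forces it to go through. Two further gaps: first, you pass without comment from an arbitrary free group $F$ to a finitely generated $F_n$, whereas the conjecture as stated allows infinite rank, where none of the train-track or cubulation machinery is available; second, even in the polynomially growing base case your claim that a ``Magnus-type'' upper-triangular representation is \emph{faithful} on the semidirect product is asserted, not proved.
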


\section{Proof of the main theorem}\label{sec:proof.of.main.theorem}

Let $F_n$ be the free group with basis $\{x_1, x_2, \cdots , x_n\}$.
Let ${\chi}_{k,i}$ ($1 \le i, k \le n$) denote the elements of
${\aut}(F_n)$ defined by:
$${\chi}_{k,i}(x_j) = \left\{
\begin{array}{ll}
x_j, & \text{if}\; j \not= k \\
x_i^{-1}x_kx_i, & \text{if}\; j = k.
\end{array}\right.$$
The {\it McCool subgroup} $M_n$ of $\aut(F_n)$ is the subgroup generated by ${\chi}_{k,i}$:
$$M_n = {\langle} {\chi}_{k,i}:\; k, i = 1, 2, \dots n, \; k \not= i{\rangle}.$$
The {\it upper triangular McCool} subgroup is the subgroup generated by:
$$M_n^+ = {\langle} {\chi}_{k,i}:\; k, i = 1, 2, \dots n, \; k < i{\rangle}.$$
For the main properties of $M_n$ and $M_n^+$ see \cite{cpvw}.
There is a natural map $\aut(F_n) \to GL(n, {\bbZ})$ which is an
epimorphism with kernel denoted $IA_n$. It is known that $IA_2$ is a
free group with two generators given by ${\chi}_{2,1}$ and
${\chi}_{1,2}$. Thus there is a (non-split) group extension
$$1 \to F_2 \to \aut(F_2) \to GL(2, {\bbZ}) \to 1.$$
The fact that the kernel is $F_2$ was shown in \cite{chang} and
\cite{nielsen}. Here $F_2$ is identified with the subgroup of inner
automorphisms of $\aut(F_2)$. We write $F_2 = {\langle}{\tau}_a,
{\tau}_b{\rangle}$, the inner automorphisms of $F_2$.

A basis for $F_2$ will occur in three distinct ways below. Thus a
free group with basis $\{\alpha,\beta\}$ will be named $F_2 =
{\langle}\alpha, \beta {\rangle}$. (Thus to alert the reader, there
are 3 distinct choices of bases for $F_2$ given below by
$\{x_1,x_2\}$,  $\{a,b\}$, and $\{{\tau}_a, {\tau}_b\}$.)

The natural map induced by the mod-2 reduction  is an
epimorphism that induces an exact sequence:
$$1 \to {\Gamma}(2, 2) \to SL(2, {\bbZ}) \xrightarrow{r} SL(2, {\bbZ}/2{\bbZ}) \to 1.$$
The group ${\Gamma}(2, 2)$ is a free group of rank $2$. Also, we consider
the extension
$$1 \to {\Gamma}(2, 2) \to GL(2, {\bbZ}) \xrightarrow{(r, {\det})}
GL(2, {\bbZ}/2{\bbZ}){\times} {\bbZ}/2{\bbZ}\to 1.$$

Form the pull-back diagram:
$$\begin{CD}
1 @>>> F_2 @>>> {\cF} @>>> {\Gamma}(2,2) @>>> 1 \\
@. @VVV @VVV @VVV @. \\
1 @>>> F_2 @>>> \aut(F_2) @>>> GL(2, {\bbZ}) @>>> 1
\end{CD}$$
to obtain a morphism of extensions
$$\begin{CD}
1 @>>> F_2 @>>> {\cF} @>>> {\Gamma}(2,2) @>>> 1 \\
@. @VVV @VVV @VVV @. \\
1 @>>> F_2 @>>> \aut(F_2) @>>> GL(2, {\bbZ}) @>>> 1\\
@. @VVV @VVV @VVV @. \\
1 @>>> \{1\} @>>> GL(2, {\bbZ/ 2\bbZ}){\times}{\bbZ}/2{\bbZ} @>>>
GL(2, {\bbZ/ 2\bbZ}){\times}{\bbZ}/2{\bbZ} @>>> 1.
\end{CD}$$
This middle exact sequence is the extension
$$1 \to  IA_2 \to \aut(F_2) \to GL(2, {\bbZ}) \to 1.$$
In this case,  $IA_2$ is isomorphic to the inner automorphism group of $F_2$
generated by two elements $\chi_{i,j}$ with $i \neq j$ and $1 \leq
i,j\leq 2$.

\begin{lem}\label{lem-one}
The group $\cF$ is a subgroup of $\aut(F_2)$ of index $12$.
Furthermore, $\cF$ is generated by the inner automorphisms of $F_2$
and the automorphisms $x_i$, $i = 1, 2$,
$$\begin{array}{ll}
x_1(a) = ab^2, & x_1(b) = b \\
x_2(a) = a, & x_2(b) = ba^2.
\end{array}$$
\end{lem}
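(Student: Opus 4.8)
The plan is to read off both assertions directly from the defining pullback, which identifies $\cF$ with the preimage $\pi^{-1}(\Gamma(2,2))$ of $\Gamma(2,2)$ under the abelianization map $\pi\colon\aut(F_2)\to GL(2,\bbZ)$ whose kernel is the inner automorphism group $F_2=\langle\tau_a,\tau_b\rangle$. Thus $\cF$ sits in the short exact sequence $1\to F_2\to\cF\to\Gamma(2,2)\to 1$ already recorded in the excerpt, and the two claims of the lemma — the index and the generating set — can be treated separately using this sequence.

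For the index, I would use that $\pi$ is surjective, so that $[\aut(F_2):\cF]=[GL(2,\bbZ):\Gamma(2,2)]$. Since $\Gamma(2,2)$ is the kernel of $(r,\det)\colon GL(2,\bbZ)\to GL(2,\bbZ/2\bbZ)\times\bbZ/2\bbZ$, this index equals the order of the image of $(r,\det)$. I would check that this map is onto: reduction mod $2$ already surjects onto $GL(2,\bbZ/2\bbZ)$, and one can flip the sign of the determinant without changing the reduction by multiplying by $\mathrm{diag}(-1,1)$, which lies in $\ker r$ and has determinant $-1$. Hence the image is all of $GL(2,\bbZ/2\bbZ)\times\bbZ/2\bbZ$, of order $6\cdot 2=12$, giving index $12$.

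For the generating set, I would first exhibit explicit free generators of $\Gamma(2,2)$, namely $A=\left(\begin{smallmatrix}1&2\\0&1\end{smallmatrix}\right)$ and $B=\left(\begin{smallmatrix}1&0\\2&1\end{smallmatrix}\right)$, and then compute the images of $x_1,x_2$ under $\pi$. A direct abelianization calculation gives $\pi(x_1)=B$ and $\pi(x_2)=A$ (for instance $x_1(a)=ab^2$ sends $[a]\mapsto[a]+2[b]$ and fixes $[b]$), so $x_1,x_2\in\cF$ and their images form a free basis of $\Gamma(2,2)$. I would then invoke the standard fact that in a short exact sequence $1\to K\to G\to Q\to 1$ the group $G$ is generated by $K$ together with any lifts of a generating set of $Q$; applying this with $K=F_2=\langle\tau_a,\tau_b\rangle$ and the lifts $x_1,x_2$ yields $\cF=\langle\tau_a,\tau_b,x_1,x_2\rangle$.

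I expect the only genuine work to lie in the generator step, specifically in pinning down a free generating set of $\Gamma(2,2)$ and matching it exactly to the abelianizations of $x_1,x_2$: the freeness of $\langle A,B\rangle$ is a Sanov-type input and must be carefully aligned with the assertion that $\Gamma(2,2)$ is free of rank $2$, so that $x_1,x_2$ surject onto all of the quotient and not merely a proper subgroup. The index computation is routine once surjectivity of $(r,\det)$ is noted, and the passage from ``inner automorphisms together with lifts of the $\Gamma(2,2)$-generators generate'' to the stated presentation is purely formal.
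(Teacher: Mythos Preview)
Your proposal is correct and follows essentially the same route as the paper's proof: the paper also cites the fact that $\Gamma(2,2)$ is free on the two matrices $A_1=\left(\begin{smallmatrix}1&2\\0&1\end{smallmatrix}\right)$ and $A_2=\left(\begin{smallmatrix}1&0\\2&1\end{smallmatrix}\right)$, observes that the $x_i$ map onto these generators, and reads off the index from the order of $GL(2,\bbZ/2\bbZ)\times\bbZ/2\bbZ$. Your write-up is in fact more careful on two points --- you explicitly verify surjectivity of $(r,\det)$, and you spell out the elementary ``kernel plus lifts of generators'' step --- whereas the paper is terser. The only discrepancy is a harmless convention issue: under the column-vector convention you get $\pi(x_1)=A_2$ and $\pi(x_2)=A_1$, while the paper (implicitly using row vectors) asserts $\pi(x_i)=A_i$; since you only need that $\{x_1,x_2\}$ hits a free basis of $\Gamma(2,2)$, this swap has no effect on the argument.
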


\begin{proof}
The result follows because the group
${\Gamma}(2, 2)$ is a free group on two
generators (\cite{frasch}, \cite{kulk})
$$A_1 = \begin{pmatrix}
1 & 2 \\
0 & 1
\end{pmatrix}, \quad
A_2 = \begin{pmatrix}
1 & 0 \\
2 & 1
\end{pmatrix}.$$
Notice that the image of $x_i$ is $A_i$, for $i = 1, 2$. Also, $\cF$
has index $12$ in $\aut(F_2)$ because $GL(2, {\bbZ}/2{\bbZ})$ has
order $6$.
\end{proof}

The following describes the structure of $\cF$.

\begin{lem}\label{lem-two}
The group $\cF$ can be written as a semi-direct product $\cF =
{\langle}{\tau}_a, {\tau}_b{\rangle}{\rtimes}{\langle}x_1, x_2
{\rangle}$ with the action of $x_i$ being exactly as the action of
${\langle}x_1, x_2{\rangle}$ on ${\langle}a, b{\rangle}$.
\end{lem}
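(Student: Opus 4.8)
The plan is to exhibit $\langle x_1, x_2 \rangle$ as a complement to the inner automorphism subgroup $\langle {\tau}_a, {\tau}_b \rangle = IA_2$ inside $\cF$, and then to identify the conjugation action explicitly. The key structural input is already recorded: by Lemma~\ref{lem-one} the group $\Gamma(2,2)$ is free of rank $2$ on $A_1, A_2$, and the automorphisms $x_1, x_2$ lie in $\cF$ with $p(x_i) = A_i$, where $p \colon \cF \to \Gamma(2,2)$ is the projection in the extension $1 \to \langle {\tau}_a, {\tau}_b \rangle \to \cF \xrightarrow{p} \Gamma(2,2) \to 1$.

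First I would construct a splitting of $p$. Since $\Gamma(2,2)$ is free on $\{A_1, A_2\}$, the universal property of free groups produces a homomorphism $s \colon \Gamma(2,2) \to \cF$ with $s(A_i) = x_i$, and then $p \circ s = \mathrm{id}$ because this holds on the free generators. Thus $s$ is a section; in particular $s$ is injective, so $\langle x_1, x_2 \rangle = \mathrm{im}(s)$ is free of rank $2$, it meets $\ker(p) = \langle {\tau}_a, {\tau}_b \rangle$ only in the identity, and $\langle {\tau}_a, {\tau}_b \rangle \cdot \langle x_1, x_2 \rangle = \cF$. These two facts give the internal semidirect product decomposition $\cF = \langle {\tau}_a, {\tau}_b \rangle \rtimes \langle x_1, x_2 \rangle$.

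It then remains to identify the conjugation action of $\langle x_1, x_2 \rangle$ on $\langle {\tau}_a, {\tau}_b \rangle$ with the prescribed action on $\langle a, b \rangle$. For this I would invoke the standard identity $\phi {\tau}_w \phi^{-1} = {\tau}_{\phi(w)}$, valid for every $\phi \in \aut(F_2)$ and every $w \in F_2$, together with ${\tau}_{uv} = {\tau}_u {\tau}_v$. Applying it to $\phi = x_1$ yields $x_1 {\tau}_a x_1^{-1} = {\tau}_{x_1(a)} = {\tau}_{ab^2} = {\tau}_a {\tau}_b^2$ and $x_1 {\tau}_b x_1^{-1} = {\tau}_b$, which matches $x_1(a) = ab^2$, $x_1(b) = b$ under the correspondence $a \leftrightarrow {\tau}_a$, $b \leftrightarrow {\tau}_b$; the computation for $x_2$ is identical. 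This shows the action is precisely the one described.

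The argument is essentially formal, so I do not anticipate a genuine obstacle: the splitting is \emph{forced} by the freeness of $\Gamma(2,2)$ (any lift of free generators extends to a homomorphic section), and the action is pinned down entirely by the conjugation formula. The only point requiring a small amount of care is verifying that the chosen lifts $x_1, x_2$ lie in $\cF$ and project to $A_1, A_2$, which is exactly the content of Lemma~\ref{lem-one}.
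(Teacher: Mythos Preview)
Your argument is correct and follows essentially the same route as the paper: both use the freeness of $\Gamma(2,2)$ to split the extension via the section $A_i \mapsto x_i$, and then identify the conjugation action. You simply make explicit (via the identity $\phi\,\tau_w\,\phi^{-1} = \tau_{\phi(w)}$) what the paper leaves to ``inspection.''
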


\begin{proof}
Since the group
${\Gamma}(2,2)$ is a free group, the extension is split and thus a
semi-direct product. Furthermore, the extension is classified by the
map $P{\Gamma}(2,2) \to {\aut}(F_2)$ which sends $A_1$ to $x_1$ and
$A_2$ to $x_2$. The proof of the Lemma follows by inspection.
\end{proof}

Remember that there is a split exact sequence
$$1 \to F_2 \to \hol(F_2) \xrightarrow{p} \aut(F_2) \to 1.$$
Also, ${\cF} < \aut(F_2)$ and thus it is linear.
Lemma \ref{lem-one} implies that the group $\pi = p^{-1}({\cF})$ is
of index $6$ in $\hol(F_2)$ and it fits into an exact sequence
$$1 \to F_2 = {\langle}a, b{\rangle} \to \pi \to {\cF} \to 1.$$

The next Lemma is the main tool used in the proof of the Main
Theorem.

\begin{lem}\label{lem:embedding}
There are two maps
$$f_1, f_2: \pi \to {\cF}$$ such that the product
$$f_1 \times f_2: \pi \to {\cF}\times {\cF}$$
is a monomorphism. Thus $\pi$ and $\hol(F_2)$ are linear.
\end{lem}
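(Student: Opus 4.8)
The plan is to make the structure of $\pi$ completely explicit and then read off $f_1$ and $f_2$ as two coordinate projections. Since the splitting $\hol(F_2) \to \aut(F_2)$ restricts over $\cF$, I would first write $\pi = {\langle}a,b{\rangle}\rtimes \cF$, and then invoke Lemma \ref{lem-two} to decompose $\cF = {\langle}{\tau}_a,{\tau}_b{\rangle}\rtimes {\langle}x_1,x_2{\rangle}$. Writing elements of $\hol(F_2)$ as pairs $(w,\phi)$ with $w\in F_2$ and $\phi\in\aut(F_2)$, the subgroup $N = {\langle}{\tau}_a,{\tau}_b{\rangle} = IA_2 = \mathrm{Inn}(F_2)$ acts on the normal copy ${\langle}a,b{\rangle}$ by conjugation, while $T = {\langle}x_1,x_2{\rangle}$ acts on both by the tautological action. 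The goal is to identify $\pi$ with $(F_2\times F_2)\rtimes_{\Delta} T$, a diagonal semidirect product, whose two projections will be $f_1$ and $f_2$.

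The key step is the classical fact that ${\langle}a,b{\rangle}\rtimes \mathrm{Inn}(F_2) \cong F_2\times F_2$. Inside $\hol(F_2)$ I would introduce the two maps $\lambda(g) = (g,1)$ and $\mu(g) = (g,c_{g^{-1}})$, where $c_g$ is conjugation by $g$. A direct computation shows that $\lambda$ is a homomorphism, $\mu$ is an anti-homomorphism (so its image is again a copy of $F_2$), and that $\lambda(g)$ commutes with $\mu(h)$ for all $g,h$. Because $F_2$ has trivial center, both are injective and $\lambda(F_2)\cap\mu(F_2) = \{1\}$; moreover ${\tau}_h = (1,c_h) = \mu(h)^{-1}\lambda(h)$, so $\lambda(F_2)$ and $\mu(F_2)$ together generate ${\langle}a,b{\rangle}\rtimes N$. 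Hence this subgroup is the internal direct product $\lambda(F_2)\times\mu(F_2)\cong F_2\times F_2$.

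Next I would compute how $T$ acts on these factors. Conjugating in $\hol(F_2)$ gives $(1,x)(g,1)(1,x^{-1}) = (x(g),1)$ and $(1,x)(g,c_{g^{-1}})(1,x^{-1}) = (x(g),c_{x(g)^{-1}})$, so $T$ normalizes both $\lambda(F_2)$ and $\mu(F_2)$ and acts on each by the same tautological rule $g\mapsto x(g)$. Therefore $\pi \cong (\lambda(F_2)\times\mu(F_2))\rtimes_{\Delta} T$, with $T$ acting by identical automorphisms on the two factors. The very same identity $x\,{\tau}_h\,x^{-1} = c_{x(h)}$ shows $N\cong F_2$ as a $T$-group with the tautological action, whence $\cF = N\rtimes T\cong F_2\rtimes T$.

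With this picture the two maps are the coordinate projections: I would define $f_1,f_2:\pi\cong (F_2\times F_2)\rtimes_{\Delta}T \to F_2\rtimes T \cong \cF$ by $f_1(g_1,g_2,t) = (g_1,t)$ and $f_2(g_1,g_2,t) = (g_2,t)$. These are homomorphisms precisely because $T$ acts diagonally, and an element lying in both kernels must satisfy $g_1 = g_2 = 1$ and $t = 1$, so $f_1\times f_2$ is a monomorphism. Since $\cF\le\aut(F_2)$ is linear, so is $\cF\times\cF$, hence $\pi$ is linear; and as $\pi$ has finite index in $\hol(F_2)$, linearity passes to $\hol(F_2)$. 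The main point requiring care is purely bookkeeping: one must verify that $T$ acts by identical automorphisms on the two copies of $F_2$ and that $F_2\rtimes T$ is genuinely identified with $\cF$, everything hinging on $\mathrm{Inn}(F_2)\cong F_2$ being $T$-equivariantly the same module as the normal copy ${\langle}a,b{\rangle}$.
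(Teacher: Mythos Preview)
Your proposal is correct and follows essentially the same route as the paper. The paper performs the change of generators $t_a = a^{-1}\tau_a$, $t_b = b^{-1}\tau_b$, verifies by direct computation that $[t_a,a]=[t_a,b]=[t_b,a]=[t_b,b]=1$ and that $x_i$ acts on $\langle t_a,t_b\rangle$ exactly as on $\langle a,b\rangle$, and then takes the two projections of $(\langle a,b\rangle\times\langle t_a,t_b\rangle)\rtimes\langle x_1,x_2\rangle$. Your maps $\lambda,\mu$ encode the very same change of basis: in your notation $t_g = g^{-1}\tau_g = (g^{-1},c_g) = \mu(g^{-1})$, so $\langle t_a,t_b\rangle = \mu(F_2)$, and your conceptual ``left/right translations commute'' computation is exactly the commutation check the paper carries out by hand.
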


Notice that Lemma \ref{lem:embedding} implies that $\pi$ and thus
$Hol(F_2)$ are linear. Thus the Main Theorem follows and it suffices
to prove Lemma \ref{lem:embedding}, the subject of the next section.

\section{Proof of Lemma \ref{lem:embedding}}

Recall from Lemma \ref{lem-one} that the group $\cF$ is a subgroup
of $\aut(F_2)$ of index $6$ generated by the inner automorphisms of
$F_2$ and the automorphisms $x_i$, $i = 1, 2$,
$$\begin{array}{ll}
x_1(a) = ab^2, & x_1(b) = b \\
x_2(a) = a, & x_2(b) = ba^2.
\end{array}$$ This action means that the elements $x_i$ are acting
by conjugation. So a restatement of this action is given by
$$\begin{array}{ll}
x_1\cdot a\cdot x_1^{-1} = x_1(a) = ab^2, & x_1\cdot b\cdot x_1^{-1} = x_1(b) = b \\
x_2\cdot a\cdot x_2^{-1}= x_2(a) = a, & x_2\cdot b\cdot x_2^{-1} =
x_2(b) = ba^2.
\end{array}$$

Furthermore, by Lemma \ref{lem-two}, there is an extension
$$1 \to  {\langle}{\tau}_a, {\tau}_b{\rangle} \to {\cF} \to {\langle}x_1, x_2{\rangle} \to 1$$
where the action of is specified by regarding $a =  {\tau}_a$ and $b
= {\tau}_b$:
$$\begin{array}{ll}
x_1{\tau}_ax_1^{-1} = {\tau}_a{\tau}_b^2, \\
x_1{\tau}_bx_1^{-1} = {\tau}_b, \\
x_2{\tau}_ax_2^{-1} = {\tau}_a \\
x_2{\tau}_bx_2^{-1} = {\tau}_b{\tau}_a^2.
\end{array}$$

Furthermore, the group $\pi$ is a split extension
$$1 \to F_2 = {\langle}a, b{\rangle} \to \pi \to {\cF} \to 1$$
with generators for ${\cF}$ specified above.

The additional data specifying the action of ${\cF}$ on $F_2 =
{\langle}a, b{\rangle}$ is given next.

$$\begin{array}{ll}
x_1ax_1^{-1} = ab^2\\
x_2ax_2^{-1} = a \\
x_1bx_1^{-1} = b,\\
x_2bx_2^{-1} = ba^2 \\
{\tau}_aa{\tau}_a^{-1} = a,\\
{\tau}_ba{\tau}_b^{-1} = bab^{-1}, \\
{\tau}_ab{\tau}_a^{-1} = aba^{-1}, \hbox{and}\\
{\tau}_bb{\tau}_b^{-1} = b
\end{array}$$

By a direct comparison, the above gives two distinct isomorphic
copies of $\cF$ in $\pi$.

These relations are summarized as follows:
$$\begin{array}{ll}
x_1ax_1^{-1} = ab^2, & x_2ax_2^{-1} = a \\
x_1bx_1^{-1} = b, & x_2bx_2^{-1} = ba^2 \\
x_1{\tau}_ax_1^{-1} = {\tau}_a{\tau}_b^2, & x_2{\tau}_ax_2^{-1} = {\tau}_a \\
x_1{\tau}_bx_1^{-1} = {\tau}_b, & x_2{\tau}_bx_2^{-1} = {\tau}_b{\tau}_a^2 \\
{\tau}_aa{\tau}_a^{-1} = a, & {\tau}_ba{\tau}_b^{-1} = bab^{-1} \\
{\tau}_ab{\tau}_a^{-1} = aba^{-1}, & {\tau}_bb{\tau}_b^{-1} = b
\end{array}$$

Rewrite the last two pairs of relations as follows:
$$\begin{array}{ll}
a^{-1}{\tau}_aa{\tau}_a^{-1}a = a, & b^{-1}{\tau}_ba{\tau}_b^{-1}b = a \\
a^{-1}{\tau}_ab{\tau}_a^{-1}a = b, & b^{-1}{\tau}_bb{\tau}_b^{-1}b
=b
\end{array}$$

The following hold:

$$\begin{array}{ll}
{\tau}_aa= a{\tau}_a, & b^{-1}{\tau}_ba{\tau}_b^{-1}b = a \\
a^{-1}{\tau}_ab{\tau}_a^{-1}a = b, & {\tau}_bb = b{\tau}_b
\end{array}$$

\

Change of generators by setting $t_a = a^{-1}{\tau}_a$ and $t_b =
b^{-1}{\tau}_b$. Notice that the previous relations are equivalent
to
$$\begin{array}{ll}
[t_a, a] = 1, & [t_b, a] = 1\\[0ex]
[t_a, b] = 1, & [t_b, b] = 1
\end{array}$$

Thus the group $\pi$ is generated by the set $\{a, b, t_a, t_b, x_1,
x_2\}$ with relations above equivalent to the following:
%%Moreover, direct calculations show that $G = {\langle}a, b, t_a, t_b, x_1,
%%x_2{\rangle}$ with relations:
$$\begin{array}{ll}
x_1ax_1^{-1} = ab^2, & x_2ax_2^{-1} = a \\
x_1bx_1^{-1} = b, & x_2bx_2^{-1} = ba^2 \\
x_1t_ax_1^{-1} = t_at_b^2, & x_2t_ax_2^{-1} = t_a \\
x_1t_bx_1^{-1} = t_b, & x_2t_bx_2^{-1} = t_bt_a^2 \\[0ex]
[t_a, a] = 1, & [t_b, a] = 1\\[0ex]
[t_a, b] = 1, & [t_b, b] = 1
\end{array}$$

Thus the group $\pi$ has a normal subgroup $N(\pi)$ generated by the
set $\{a, b, t_a, t_b\}$ with the following properties.
\begin{enumerate}
  \item The subgroup $N(\pi)$ is isomorphic to a direct product of two free
  groups ${\langle}a, b{\rangle}{\times}{\langle}t_a,
  t_b{\rangle}.$

  \item The cokernel $\pi/N(\pi)$ is isomorphic to a free group ${\langle}x_1,
  x_2{\rangle}$.
  \item There is a homomorphisms $h: \pi \to \cF$  specified by
  sending

\begin{enumerate}
  \item  $t_a$ and $t_b$ to $1$
  \item $x_i$ to $x_i$,
  \item $a$ to $a$ and $b$ to $b$.
\end{enumerate}

\item The kernel of $h$ is the free group ${\langle}t_a, t_b{\rangle}$.
\end{enumerate}

Notice that the intersection of kernels of $ker(h) \cap ker(p)$ is
the intersection of
$${\langle}t_a, t_b{\rangle}\cap {\langle}a,b{\rangle} = \{1\}.$$
Furthermore, the maps $f_1, f_2$ of Lemma \ref{lem:embedding} are
given by $f_1 = h$ and $f_2 = p.$

Therefore $\pi = ({\langle}a,
b{\rangle}{\times}{\langle}t_a,t_b{\rangle}) {\rtimes}{\langle}x_1,
x_2{\rangle}$. Then ${\langle}a, b{\rangle}$ and ${\langle}t_a,
t_b{\rangle}$ are normal subgroups of $\pi$ and thus $\pi$ admits two
epimorphisms:
$$\begin{array}{l}
f_1: \pi \to {\langle}a, b{\rangle}{\rtimes}{\langle}x_1, x_2{\rangle}
\cong
{\cF}\\
f_2: \pi \to {\langle}t_a, t_b{\rangle}{\rtimes}{\langle}x_1,
x_2{\rangle} \cong {\cF}.
\end{array}$$
But $\cF$ is linear, as a subgroup of $\aut(F_2)$ and $\text{ker}(f_1) =
{\langle}t_a, t_b{\rangle}$ and
$\text{ker}(f_2) = {\langle}a, b{\rangle}$. Since
$\text{ker}(f_1){\cap}\text{ker}(f_2) = \{1\}$, the composition
$$\pi \xrightarrow{\Delta} \pi{\times}\pi \xrightarrow{f_1{\times}f_2} {\cF}{\times}{\cF}$$
is a monomorphism, where $\Delta$ is the diagonal map. Since
${\cF}{\times}{\cF}$ is linear, $G$ is linear. But $\pi$ has index
$6$ in $\hol(F_2)$. Thus $\hol(F_2)$ is linear, completing the proof
of Lemma \ref{lem:embedding} and the Main Theorem.

\section{Proof of Corollary \ref{cor-extra}}

Let $\pi$ be a linear group. Let
$$1 \to F_2 \to G \xrightarrow{p} \pi \to 1$$
be a split extension. The result to be proven is that $G$ is linear.
The split extension induces a commutative diagram of exact
sequences:
$$\begin{CD}
1 @>>> F_2 @>>> G @>{p}>> \pi @>>> 1  \\
@. @| @VV{i}V @VV{j}V @. \\
1 @>>> F_2 @>>> \hol(F_2) @>>> \aut(F_2) @>>> 1
\end{CD}$$
where $j$ is the map induced by the action of $\pi$ on $F_2$. Notice
that the right-hand diagram is a pull-back diagram. Thus the map
$$i{\times}p: G \to \hol(F_2){\times}{\pi}$$
is an injection. Since $\pi$ and $\hol(F_2)$ are linear, $G$ is
linear.

\section{Proof of Corollary \ref{cor:gamma.1.2.is.linear}}

Let $\Pi$ denote the group of orientation preserving homeomorphisms
$Top^+(T)$, and $$\mbox{Conf}(T,k) = \{(z_1, \cdots, z_k)\in T^k|
z_i \neq z_j \ \mbox{if} \ i \neq j\}$$ the configuration space of
$k$ points in $T$. Write $\text{Top}^+(T, Q_k)$ for the topological
group of the orientation preserving self-homeomorphisms of $T$ that
leave $Q_k$, a set of $k$ distinct points in $T$, invariant.
Similarly, we write $P\text{Top}^+(T, Q_k)$ for the
orientation-preserving homeomorphisms of $T$ that fix $Q_k$
pointwise. Denote
$${\Gamma}_1^{k} = {\pi}_0(\text{Top}^+(T, Q_k))\ \  {\rm and}\ \  P{\Gamma}_1^{k} ={\pi}_0(P\text{Top}^+(T, Q_k)),$$ for the
corresponding mapping class groups.

Recall the following facts \cite{cohen}.
\begin{enumerate}
  \item If  $k \geq 2 $, then the spaces
$$E\Pi\times_{\Pi} \mbox{Conf}(T,k),$$ and $$E\Pi\times_{\Pi}
\mbox{Conf}(T,k)/\Sigma_k$$ are respectively $K( P\Gamma_1^k,1 )$,
and $K(\Gamma_1^k,1 )$.
  \item Furthermore, $E\Pi\times_{\Pi} \mbox{Conf}(T,k)$ is homotopy
equivalent to $$ESL(2,\mathbb Z) \times_{SL(2,\mathbb Z)}
\mbox{Conf}(T -Q_1,k-1)$$ where $Q_1=\{ (1,1)\}\subset T$. Thus there is a fibration
$$ESL(2,\mathbb Z)\times_{SL(2,\mathbb Z)} \mbox{Conf}(T,2) \to BSL(2,\mathbb Z)$$ with
fibre $T -Q_1.$

\end{enumerate}

Using the above one can easily see that the group $P\Gamma_1^1$ is isomorphic to $SL(2,\mathbb Z)$.
Also, the kernel of the natural mod-$2$ reduction map
$$SL(2,\mathbb Z) \to SL(2,\mathbb Z/ 2\mathbb Z)$$ denoted
$S\Gamma(2,2)$ here is a free group on two letters. So, since the
fundamental group of $T^2 -Q_1$ is free on two letters, the
fundamental group of
$$\pi_1(ESL(2,\mathbb Z)\times_{SL(2,\mathbb Z)} {\rm Conf}(T,2)) = P\Gamma_1^2$$ has an
index six subgroup $K$ which admits an extension $$1 \to F_2 \to K
\to \Gamma(2,2) \to 1$$ and is split. Therefore, by the main Theorem \ref{thm:main},
the group $K$ is linear and thus $P\Gamma_1^2$ is linear. Notice that $P\Gamma_1^2$ has index
two in $\Gamma_1^2$ and therefore $\Gamma_1^2$ is linear.

\section{On large linear subgroups of $Aut(F_n)$}

In this final section we present a small step towards understanding Question \ref{cs}.
For any group $G$, there is a group homomorphism defined $$E:
{\hol}(G) \to {\aut}(G*F)$$  and shown to be a monomorphism where
$F$ is a free group \cite{cw}. Explicitly, the homomorphism is
defined as follows:
\begin{itemize}
\item For $f\in {\aut}(G)$,
$$E(f)(z) = \left\{
\begin{array}{ll}
f(z), & \text{if}\; z\in G \\
z, & \text{if}\; z\in F.
\end{array}
\right.$$
\item For $h\in G$,
$$E(h)(z) = \left\{
\begin{array}{ll}
z, & \text{if}\; z\in G \\
hzh^{-1}, & \text{if}\; z\in F.
\end{array}\right.$$
\end{itemize}

It is known that $\aut(F_3)$ is not linear (\cite{fp}). But by the
above, it obvious that $\hol(F_2)$ is a subgroup of $\aut(F_3)$.
Thus $\aut(F_3)$ is not linear but contains a large, `natural',
linear subgroup.

\end{document}